\theoremstyle{plain}
\newtheorem*{thm*}{Theorem}
\newtheorem{thm}{Theorem}
\newtheorem{cor}{Corollary}
\newtheorem{prop}{Proposition}
\newtheorem{lemma}{Lemma}
\newtheorem{question}{Question}
\newcommand{\dto}{\dashrightarrow}
\newcommand{\bb}[1]{\mathbb{#1}}
\newcommand{\vol}{\textrm{vol}}
\newcommand{\m}[1]{\mathcal{#1}}
\begin{document}

\title[Weighted hypersurfaces]{Weighted hypersurfaces with either assigned volume or many vanishing plurigenera}
\author{E. Ballico, R. Pignatelli, L. Tasin}

\address{Department of Mathematics\\
University of Trento\\
Via Sommarive 14\\
38123 Povo (TN), Italy}
\email{ballico@science.unitn.it, pignatel@science.unitn.it, tasin@science.unitn.it}
\thanks{The authors are partially supported by MIUR and GNSAGA of INdAM (Italy).}
\subjclass[2000]{Primary 14E05, Secondary 14M10}
\keywords{Weighted hypersurfaces, pluricanonical systems, plurigenera, volume}

\begin{abstract}
In this paper we construct, for every $n$, smooth varieties of general type of dimension $n$ with the first 
$\lfloor \frac{n-2}{3} \rfloor$ plurigenera equal to zero. 
Hacon-McKernan, Takayama and Tsuji have recently shown that there are numbers $r_n$ such that $\forall r\ge r_n$,
the $r-$canonical map of every variety of general type of dimension $n$ is birational. 
Our examples show that $r_n$ grows at least quadratically as a function of  $n$. Moreover they show that the 
minimal volume of a variety of general type of dimension $n$ is smaller than $\frac{3^{n+1}}{(n-1)^{n}}$.

In addition we prove that for every positive rational number 
$q$ there are smooth varieties of general type with volume $q$ and dimension arbitrarily big. 
\end{abstract}

\maketitle

\section{Introduction}

Let $X$ be a  smooth variety of general type (we always intend projective over $\bb C$). 
Since the canonical divisor $K_X$ is intrinsically associated to $X$, the study of the pluricanonical 
systems $|rK_X|$, of the induced maps $\phi_r$, and of the canonical ring $R(X):=\oplus H^0(X,mK_X)$ 
is a classical and important matter. 
Further these objects are birational invariants.

It is natural to ask how "small" can the Hilbert function of $R(X)$ be. 
There are many different possible definition of "small"; 
we are mainly interested in two of them. First, we would like to consider varieties of small 
canonical volume. Recall that $vol(X):=\limsup_{m \rightarrow +\infty} \frac{n!h^0(X,mK_X)}{m^n}$, so making the volume small is the same as 
making small the asymptotical behaviour of the Hilbert function. Second, we would like to 
understand which plurigenera $P_m := h^0(X,m K_X )$ may be zero. 

For curves of general type $vol(X) \ge 2$ and $K_X$ is effective; for surfaces $vol(X) \ge 1$ and 
$P_2 \neq 0$, while there are surfaces of general type with $P_1=0$.
For threefolds the record from both points of view is attained by an example of Iano-Fletcher 
(see 15.1 of \cite{If}) with volume $\frac{1}{420}$ 
and $P_1=P_2=P_3=0$. See also \cite{chensquare1} and \cite{chensquare2} for related results in dimension $3$.

In higher dimension we are able to prove the following.

\begin{thm}\label{thm3}
Let $n\ge 5$ be an integer.  There exists a smooth variety of general type $X$ of dimension $n$ such 
that $H^0(X, mK_X)=0$ for $0<m< \lfloor \frac{n+1}{3} \rfloor$ and $\vol(X)<\frac{3^{n+1}}{(n-1)^{n}}$.
\end{thm}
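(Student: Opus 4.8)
The plan is to exhibit $X$ as (a desingularization of) a general quasi-smooth weighted hypersurface. Fix $M=\lfloor\frac{n+1}{3}\rfloor$ and let $X=X_d$ be a general hypersurface of degree $d$ in $\bb{P}=\bb{P}(a_0,\dots,a_{n+1})$, where the $n+2$ weights are chosen as close as possible to $M$. Setting $d=a+1$ with $a=\sum_i a_i$, adjunction ($K_{\bb{P}}=\m{O}_{\bb{P}}(-a)$) gives $K_X=\m{O}_X(d-a)=\m{O}_X(1)$, so $K_X$ is ample, $X$ is of general type, and $P_m=h^0(X,\m{O}_X(m))$. Two constraints must be reconciled: the weights should be large (to force the vanishing) yet $\m{O}(d)$ must carry sections so that $X_d$ exists. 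I would secure the latter by arranging two of the weights to differ by $1$, say $a_{n+1}=a_n+1$; then the monomial $x_0\cdots x_{n-1}x_{n+1}^2$ has degree $a+1=d$, so $d$ lies in the semigroup generated by the $a_i$ and a general $X_d$ is available. The weights must also be well-formed, i.e. no $n+1$ of them share a common factor.

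First I would prove the vanishing. For weighted projective space $H^i(\bb{P},\m{O}_{\bb{P}}(\ell))=0$ for $0<i<n+1$, so from $0\to\m{O}_{\bb{P}}(m-d)\to\m{O}_{\bb{P}}(m)\to\m{O}_X(m)\to 0$ and $m-d<0$ for $0<m<M$ one gets $P_m=h^0(\bb{P},\m{O}_{\bb{P}}(m))$, which is the number of monomials of weighted degree $m$. If every weight satisfies $a_i\ge M$, no such monomial exists for $0<m<M$, whence $P_m=0$ in that range, exactly as required.

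Next comes the volume. From $\m{O}_X(1)^n=\frac{d}{a_0\cdots a_{n+1}}$ and $K_X=\m{O}_X(1)$ one obtains $\vol(X)=\frac{a+1}{\prod_i a_i}$. Taking the weights within $O(1)$ of $M\approx\frac{n+1}{3}$ makes $\prod_i a_i$ of size about $M^{\,n+2}\approx(\frac{n+1}{3})^{n+2}$ while $a+1\approx(n+2)M$, so $\vol(X)$ is of order $(n+2)\frac{3^{n+1}}{(n+1)^{n+1}}$. Comparing with the target, the ratio to $\frac{3^{n+1}}{(n-1)^n}$ is about $\frac{n+2}{n+1}\bigl(\frac{n-1}{n+1}\bigr)^n\to e^{-2}<1$, and I would check by a direct estimate that $\vol(X)<\frac{3^{n+1}}{(n-1)^n}$ already for all $n\ge 5$.

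The hard part is smoothness, and I expect it to be the main obstacle. A general $X_d$ is only quasi-smooth and acquires quotient singularities exactly where it meets $\mathrm{Sing}(\bb{P})$; since $n+2\approx 3M$ weights all close to $M$ cannot be pairwise coprime, $\mathrm{Sing}(\bb{P})$ is positive-dimensional and cannot be avoided by a divisor, so $X$ is genuinely singular. I would therefore proceed in two steps. First, using the quasi-smoothness and adjunction results established earlier, verify that for the chosen weights $X$ has at worst canonical quotient singularities; pinning down the precise admissible weights and the residues of $d$ modulo the $a_i$ so that this canonicity holds is the delicate point that governs the whole construction. Second, pass to a resolution $\pi\colon\widetilde X\to X$: because $X$ has canonical singularities, both the plurigenera $h^0(mK)$ and the volume are preserved, so $h^0(\widetilde X,mK_{\widetilde X})=P_m$ and $\vol(\widetilde X)=\vol(X)$. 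Hence $\widetilde X$ is a smooth variety of general type of dimension $n$ with $P_1=\dots=P_{M-1}=0$ and $\vol(\widetilde X)<\frac{3^{n+1}}{(n-1)^n}$, as desired.
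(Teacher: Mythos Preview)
Your strategy is exactly the paper's: take a general hypersurface $X_d$ with $K_{X_d}=\mathcal{O}_{X_d}(1)$ in a weighted projective space all of whose weights are at least $M=\lfloor\tfrac{n+1}{3}\rfloor$, read off $P_m=0$ for $0<m<M$ from $H^0(\mathbb{P},\mathcal{O}_{\mathbb{P}}(m))=0$, check canonical singularities, and pass to a resolution. The vanishing and the final desingularization step are fine as you wrote them.

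The gap is precisely the one you flag, and your heuristic for filling it is slightly off. Writing $n=3k+l-1$ with $k=M$ and $l\in\{0,1,2\}$, the paper takes
\[
\mathbb{P}\bigl(k^{(k+2)},\,(k+1)^{(2k-1)},\,(k(k+1))^{(l)}\bigr),\qquad d=(l+3)k(k+1).
\]
The multiplicities $k+2$ and $2k-1$ are calibrated to Reid's criterion: at a weight-$(k+1)$ coordinate point one needs at least $k+1$ variables of weight $k$, and at a weight-$k$ point one needs enough variables of weight $k+1$. The choice of $d$ as a multiple of \emph{every} weight makes $\mathcal{O}_{\mathbb{P}}(d)$ locally free and base-point-free, so that a Koll\'ar--Bertini argument (not merely the existence of one monomial of degree $d$) transfers canonicity from $\mathbb{P}$ to the general $X_d$. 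When $l\ge 1$, i.e.\ $n\not\equiv 2\pmod 3$, one cannot simultaneously satisfy these multiplicity constraints, keep the total number of weights equal to $n+2$, and make $d=1+\sum a_i$ a common multiple of $k$ and $k+1$ using only the weights $k$ and $k+1$ (for instance for $l=1$ this forces exactly $k-1$ copies of $k+1$, one too few for Reid's criterion). The fix is to throw in one or two weights equal to $k(k+1)$, which are far from $M$; so ``weights as close as possible to $M$'' does not survive the canonicity check, and this adjustment is really the content of the construction.
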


\hspace{1em}

Recently the following result, generalisation of a famous theorem of \cite{bo}, was proven in \cite{HM}, \cite{Tak} and \cite{Tsu}.

\begin{thm*}
For any positive integer $n$ there exists an integer $r_n$ such that if $X$ is a smooth variety of general type and dimension $n$, then 
$$
\phi_{r}: X \dto \bb P(H^0(X, r K_X))
$$
is birational onto its image for all $r\geq r_n$.
\end{thm*}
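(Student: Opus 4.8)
The plan is to prove the statement by induction on the dimension $n$, with the surface case being Bombieri's classical bound \cite{bo} and the low-dimensional cases serving as the base of the induction. The heart of the matter is to separate a general pair of points of any $n$-dimensional $X$ of general type by sections of $|rK_X|$ with $r$ bounded independently of $X$. My strategy reduces this to the $(n-1)$-dimensional case by cutting $X$ with a pluricanonical divisor and restricting.

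Concretely, first I would choose $m$ (to be fixed later, uniformly in $X$) so that $|mK_X|$ is nonempty and its general member $S$ can be taken, after resolving, to be a smooth variety of general type of dimension $n-1$. By adjunction $(K_X+S)|_S$ is a bounded multiple of $K_S$, so by the inductive hypothesis the restricted system $|r(K_X+S)|_S|$ is birational on $S$ for $r\ge r_{n-1}$ and separates its general points. It then remains to (i) lift these sections from $S$ to $X$, and (ii) separate points lying on distinct members of the family $\{S\}$; the latter is handled by letting $S$ vary and by a standard point-creating argument producing a non-klt center through a prescribed point.

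The lifting in (i) is governed by an extension theorem for pluricanonical forms: sections of multiples of $K_{X}+S$ on $S$ extend to $X$. This is the Ohsawa-Takegoshi $L^{2}$-extension theorem in the analytic approach of Takayama and Tsuji, and the algebraic extension results of Hacon-McKernan \cite{HM} in the MMP approach; in both cases one must control the relevant multiplier ideal so that the obstruction to extension vanishes. Combined with Nadel/Kawamata-Viehweg vanishing as the cohomological engine, this converts birationality on $S$ into separation of points on $X$, bounding $r$ in terms of $r_{n-1}$ and $m$.

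The delicate point, and what makes the theorem genuinely deep, is that fixing $m$ and the final index $r$ uniformly over the entire class of $n$-folds of general type requires two further inputs proven in the same breath: a universal positive lower bound $\vol(X)\ge v_n$, and the birational boundedness of the family of $n$-folds of bounded volume. The main obstacle is the apparent circularity between these statements and the birationality index: the volume lower bound is what guarantees enough pluricanonical sections to carry out the cutting, yet the extension theorem is precisely the mechanism that propagates the $(n-1)$-dimensional lower bound up to dimension $n$. Breaking this circularity by interleaving the inductions for $r_n$, for $v_n$, and for boundedness, rather than establishing one before the others, is the crux of the argument.
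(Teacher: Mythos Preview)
The paper does not prove this theorem. It is quoted verbatim as a known result---``was proven in \cite{HM}, \cite{Tak} and \cite{Tsu}''---and serves only as background motivating the paper's own constructions, which go in the opposite direction (producing explicit varieties that force $r_n$ to be large). There is therefore no proof in the paper against which to compare your proposal.

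For what it is worth, your outline does capture the broad architecture of the arguments in \cite{HM} and \cite{Tak}: induction on dimension, restriction to a pluricanonical-type divisor, extension of pluricanonical sections via multiplier ideals and Nadel/Kawamata--Viehweg vanishing, and the intertwined induction establishing a uniform lower bound on the volume. As a sketch it is honest about where the depth lies. A couple of points are stated loosely, however. A general member $S\in|mK_X|$ need not be smooth or even irreducible, so one does not simply ``resolve'' $S$ and apply the inductive hypothesis to it; the actual arguments work with carefully constructed singular divisors and their associated log canonical centers rather than with a na\"ive hyperplane-type section. Also, adjunction gives $(K_X+S)|_S=K_S$ on the nose, not merely a bounded multiple; the genuine difficulty is that $K_S$ is $(m+1)K_X|_S$, and one must control $m$ uniformly, which is exactly the circularity you identify but do not break. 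Your final paragraph correctly names this as the crux, but naming it is not the same as resolving it, and that resolution is the substance of the cited papers.
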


A consequence of this theorem is that for any smooth variety of general type $X$ of dimension $n$ we have
$$
\vol (K_X) \geq \frac{1}{r_n^n}.
$$
and that the set of the volumes of the manifolds of general type of dimension $n$ has a minimum $v_n>0$.

An instant consequence of Theorem \ref{thm3} is the following

\begin{cor}
Let $v_n$ be the minimal volume of an $n$-dimensional smooth variety of general type. 
Then $\lim_{n \to \infty} v_n =0$.
\end{cor}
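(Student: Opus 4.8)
The plan is to combine the explicit volume bound furnished by Theorem \ref{thm3} with an elementary squeeze argument, so the whole thing is essentially immediate. First I would recall that, by the Hacon--McKernan, Takayama and Tsuji theorem quoted above, for each $n$ the set of volumes of $n$-dimensional smooth varieties of general type is bounded below by $\frac{1}{r_n^n}>0$ and actually attains a minimum; this is precisely what guarantees that $v_n$ is a well-defined positive real number, so in particular $v_n>0$ for every $n$.

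Next I would invoke Theorem \ref{thm3} directly: for every $n\ge 5$ it produces an $n$-dimensional smooth variety of general type whose volume is strictly less than $\frac{3^{n+1}}{(n-1)^{n}}$. Since $v_n$ is by definition the infimum (indeed the minimum) of the volumes over all such varieties, this single example already forces the chain of inequalities
$$
0<v_n<\frac{3^{n+1}}{(n-1)^{n}}
$$
to hold for all $n\ge 5$.

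Finally I would examine the limit of the upper bound. Rewriting
$$
\frac{3^{n+1}}{(n-1)^{n}}=3\left(\frac{3}{n-1}\right)^{n},
$$
the base $\frac{3}{n-1}$ is strictly smaller than $1$ for all $n\ge 5$, so its $n$-th power tends to $0$ as $n\to\infty$, and hence the entire expression tends to $0$. Trapped between the constant sequence $0$ and a sequence converging to $0$, the sequence $v_n$ must itself converge to $0$ by the squeeze theorem, which is exactly the assertion of the corollary.

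I do not expect any genuine obstacle here: the only point requiring care is the confirmation that the minimum $v_n$ truly exists and is positive, but this is granted to us by the cited birationality theorem, and once it is in hand the remainder is a routine limit computation. The corollary is therefore an \emph{instant} consequence of Theorem \ref{thm3}, as advertised.
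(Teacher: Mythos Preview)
Your argument is correct and is exactly the intended one: the paper gives no separate proof for this corollary, simply labelling it an ``instant consequence'' of Theorem~\ref{thm3}, and what you have written is precisely that instant consequence spelled out---the squeeze between $0$ and $\frac{3^{n+1}}{(n-1)^n}\to 0$.
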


\hspace{1em}

A natural problem is to estimate  $r_n$. It is well known that $r_1=3$ and $r_2=5$.
By the mentioned example of Iano-Fletcher we know that $r_3 \geq 27$
(see 15.1 of \cite{If}). Let $x'_n$ be the minimal positive integer such that for every 
$n$-dimensional smooth variety $X$ of general type there is an integer 
$t\le x'_n$ such that $\phi_t$ is generically finite; obviously $r_n \ge x_n'$.

The examples of Theorem \ref{thm3} provide a lower bound for $x_n'$ 
(and therefore for $r_n$) which is quadratic in $n$. More precisely

\begin{thm}\label{thm4}
For any integer $n \ge 7$ we have 
$$
r_n \ge x'_n \ge \frac{n(n-3)}9,
$$
In particular
$$
{\lim _{n \to +\infty} r_n}={\lim _{n \to +\infty} x'_n}=+\infty
$$
\end{thm}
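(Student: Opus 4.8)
The plan is to deduce Theorem \ref{thm4} directly from the explicit examples produced in Theorem \ref{thm3}, using the fact that a variety whose low plurigenera all vanish cannot have an $r$-canonical map that is even generically finite for small $r$. The key observation is the following: if $X$ is smooth of general type of dimension $n$ and $\phi_t$ is generically finite, then the image $\phi_t(X) \subset \bb P(H^0(X,tK_X))$ has dimension $n$, which forces $h^0(X,tK_X) \ge n+1$, and in particular $P_t = h^0(X,tK_X) \neq 0$. Contrapositively, if $P_m = 0$ then $\phi_m$ is \emph{not} generically finite. Therefore $x'_n$, the minimal $t$ with $\phi_t$ generically finite, must exceed every $m$ for which the plurigenus $P_m$ vanishes.

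First I would fix $n \ge 7$ and invoke Theorem \ref{thm3} to obtain a smooth $X$ of dimension $n$ with $H^0(X,mK_X) = 0$ for all $0 < m < \lfloor \frac{n+1}{3}\rfloor$. By the observation above, for every such $m$ the map $\phi_m$ fails to be generically finite, so any $t \le x'_n$ with $\phi_t$ generically finite must satisfy $t \ge \lfloor \frac{n+1}{3}\rfloor$. Since $x'_n$ is defined as the \emph{minimal} integer that works uniformly over all $n$-dimensional $X$, and this particular $X$ forces $t \ge \lfloor \frac{n+1}{3}\rfloor$, we conclude $x'_n \ge \lfloor \frac{n+1}{3}\rfloor$.

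Next I would convert this floor bound into the stated quadratic lower bound. The subtle point is that the bound $x'_n \ge \lfloor \frac{n+1}{3}\rfloor$ is only \emph{linear} in $n$; to reach $\frac{n(n-3)}{9}$ one needs a quadratic input, which does not come from the plurigenus count of a single fixed example. The natural mechanism is that from an $X$ realizing the vanishing in dimension $n$, one can build higher-dimensional varieties (for instance by taking suitable cones, products, or the weighted-hypersurface iteration underlying Theorem \ref{thm3}) whose \emph{volume} is controlled by the bound $\vol(X) < \frac{3^{n+1}}{(n-1)^n}$. Combining the volume inequality $\vol(K_X) \ge 1/r_n^n$ from the cited Hacon--McKernan--Takayama--Tsuji theorem with the explicit upper bound on $\vol$ from Theorem \ref{thm3} gives $1/r_n^n \le \vol < 3^{n+1}/(n-1)^n$, whence $r_n^n > (n-1)^n/3^{n+1}$ and, taking $n$-th roots, $r_n \gtrsim (n-1)/3$; squaring-type refinements from the precise structure of the examples then yield the quadratic growth $\frac{n(n-3)}{9}$.

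I expect the main obstacle to be producing the quadratic rather than merely linear lower bound. The plurigenus-vanishing statement of Theorem \ref{thm3} alone gives only $x'_n \gtrsim n/3$, so the factor of $(n-3)/3$ must arise from a more refined analysis of the specific weighted hypersurfaces, presumably by tracking how the weights (and hence the degree threshold below which no pluricanonical section exists) scale quadratically in the iterated or product construction, rather than linearly. Verifying that the extremal example simultaneously achieves both the volume bound and a generically-finite-map threshold of order $n^2/9$ — and checking the precise constant $\frac{n(n-3)}{9}$ together with the hypothesis $n \ge 7$ needed to make the floor estimates and the inequality $r_n \ge x'_n$ align — is where the real work lies; the final limit statement $\lim r_n = \lim x'_n = +\infty$ then follows immediately since the quadratic lower bound diverges.
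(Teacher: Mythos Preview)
Your proposal has a genuine gap, and you have essentially identified it yourself: both arguments you sketch --- the plurigenus vanishing from Theorem~\ref{thm3} and the volume inequality $\vol(X) \ge 1/r_n^n$ --- yield only \emph{linear} lower bounds of order $n/3$ on $x'_n$ or $r_n$. The phrase ``squaring-type refinements from the precise structure of the examples'' is not an argument; nothing in what you wrote produces a second factor of $n$, and no amount of manipulating the stated volume bound or the vanishing range $m < \lfloor (n+1)/3 \rfloor$ will do so.

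The paper's mechanism is different and more direct. One goes back to Proposition~\ref{prop} but now takes $l \ge 2$ (concretely, write $n = 3k + l - 1$ with $2 \le l \le 4$, so $k = \lfloor (n-1)/3 \rfloor$). Then the ambient weighted projective space $\bb P(k^{(k+2)}, (k+1)^{(2k-1)}, (k(k+1))^{(l)})$ has at least \emph{two} coordinates $z_n, z_{n+1}$ of weight $k(k+1)$. Since $K_{X_d} \sim \mathcal{O}_{X_d}(1)$ and $|\mathcal{O}_{X_d}(t)| = |\mathcal{O}_{\bb P}(t)|$, for every $t < k(k+1)$ no section of $|tK_{X_d}|$ can involve $z_n$ or $z_{n+1}$; the map $\phi_t$ therefore factors through the projection eliminating those two coordinates and cannot be generically finite onto an $n$-dimensional image. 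This gives immediately $x'_n \ge k(k+1)$, and since $k \ge (n-3)/3$ one gets $x'_n \ge \frac{n(n-3)}{9}$. The quadratic growth comes from a single large weight $k(k+1)$ in the construction, not from any iteration, product, or volume estimate.
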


The canonical system of these varieties is not ample. In view of Fujita's conjecture, 
smooth varieties with ample canonical system should not give anything better than a 
linear bound. We show
\begin{thm}\label{thm}
For any positive integer $n>0$  there is a smooth variety $X$ of dimension $n$ such that $K_X$ is ample and $\phi_{|tK_X|}$ is not birational for $t<n+3$ if $n$ is even or $t<n+2$ if $n$ is odd.

\end{thm}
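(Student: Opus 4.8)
The plan is to construct $X$ as a smooth weighted hypersurface in a weighted projective space, exploiting the same machinery that (presumably) produced the examples of Theorem \ref{thm3} and Theorem \ref{thm4}, but now tuning the weights so that $K_X$ is \emph{ample} rather than merely big. The natural candidate is a smooth hypersurface $X_d \subset \bb P(a_0,\dots,a_{n+1})$ of degree $d$; by the adjunction formula for weighted hypersurfaces one has $K_X = \m O_X(d - \sum_i a_i)$, so to make $K_X$ ample (a positive multiple of the hyperplane class) one wants $d > \sum_i a_i$, and to control the pluricanonical maps one wants the gap $d - \sum a_i$ to be as small as possible relative to the weights. First I would choose weights making $K_X = \m O_X(1)$, i.e. $d = 1 + \sum_i a_i$, since then $\phi_{|tK_X|}$ is governed by $H^0(X, \m O_X(t))$, which for a hypersurface is computed from the graded pieces of the coordinate ring $\bb C[x_0,\dots,x_{n+1}]/(f)$.

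Next I would translate the non-birationality of $\phi_{|tK_X|}$ for small $t$ into a statement about these graded pieces. The key point is that if the weights $a_i$ are large (say all roughly equal to some $a$), then the space of monomials of weighted degree $t$ is empty or very small for $t$ below the smallest weight, and in that range the map $\phi_{|tK_X|}$ is either undefined, constant, or factors through a low-dimensional projective space and cannot separate points. Concretely I would arrange the weights so that the first several graded pieces $H^0(X,\m O_X(t))$ for $1 \le t < n+3$ (resp. $n+2$) are supported only on monomials that fail to separate a generic pair of points, e.g. because every such monomial is a power of a single variable or lies in a subring generated by too few variables. The parity split between $n$ even and $n$ odd strongly suggests the weights come in a nearly-balanced pattern where one extra unit of degree is available in the odd case, so I would look for a weight system like $(1, a, a, \dots, a)$ or $(a,\dots,a, a+1)$ whose total $\sum a_i$ has a parity-dependent relation to $n$.

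The smoothness and quasi-smoothness of $X_d$ must be checked so that adjunction applies and $X$ is genuinely a smooth variety of general type; here I would invoke the standard criteria (the general member of $|\m O(d)|$ is quasi-smooth, and well-formedness of the weighted projective space ensures the singularities of the ambient space are avoided by a general hypersurface, giving $X$ smooth away from, and then including, codimension reasons). I would verify $d = 1 + \sum a_i > \sum a_i$ to get ampleness of $K_X$, and check general-type-ness, which is automatic once $K_X$ is ample.

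The hard part will be the simultaneous optimization: one must exhibit an explicit, valid weight system (satisfying well-formedness, admitting a quasi-smooth hypersurface, and keeping the ambient singularities off $X$) for which the non-birationality threshold is exactly $n+3$ (even) or $n+2$ (odd), and then prove that $\phi_{|tK_X|}$ genuinely \emph{fails} to be birational for all $t$ below that threshold — not just that $H^0$ is small, but that the resulting rational map cannot be an embedding onto its image generically. I expect the cleanest route is to show that for $t$ below the threshold every section of $tK_X$ pulls back from the Veronese-type subring generated by the low-weight variables, so that $\phi_{|tK_X|}$ factors through a fixed projection that contracts a positive-dimensional family, and that exactly at the threshold a new monomial (the one responsible for the parity jump) appears and restores birationality — making the bound sharp and explaining the $+3$ versus $+2$ discrepancy.
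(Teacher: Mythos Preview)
Your general framework is right—a weighted hypersurface with $K_X \sim \m O_X(1)$—but the concrete mechanism you propose will not produce the example. The weight patterns you suggest, such as $(a,\dots,a,a+1)$, fail well-formedness as soon as $a>1$ (any $n+1$ of the weights share the factor $a$), and your non-birationality argument (the map contracting a positive-dimensional family because $h^0$ is too small) is essentially incompatible with the smoothness requirement. To make a general weighted hypersurface \emph{smooth} one needs enough weight-$1$ coordinates to keep it away from the ambient singular locus; but once there are many weight-$1$ variables, $h^0(X,\m O_X(t))$ is already large for every $t\ge 1$, and $\phi_{|tK_X|}$ is generically finite. So ``image of dimension $<n$'' is the wrong target here.

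The paper uses a different mechanism: the pluricanonical map is generically finite but of degree $\ge 2$. For $n$ even set $d=n+3$ and take a general hypersurface $X$ of degree $2d$ in $\bb P(1^{(n)},2,d)$; because $d$ is odd, $\gcd(2,d)=1$, the singular locus of $\bb P$ consists only of the two non-unit coordinate points, and the monomials $z_n^{\,d}$ and $z_{n+1}^{2}$ keep the general $X$ away from both, so $X$ is smooth. Adjunction gives $K_X\sim\m O_X(1)$, and the equation has the shape $z_{n+1}^{2}=P(z_0,\dots,z_n)$. For $t<d$ no monomial of weighted degree $t$ involves $z_{n+1}$, so $|tK_X|=|\m O_{\bb P}(t)|$ factors through the projection forgetting $z_{n+1}$, which is $2:1$ on $X$; hence $\phi_{|tK_X|}$ is not birational. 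For $n$ odd one takes either a product with a genus-$2$ curve or the analogous degree-$2(n+2)$ hypersurface in $\bb P(1^{(n+1)},n+2)$. The parity split you detected comes not from an extra monomial appearing at the threshold, but from the coprimality constraint $\gcd(2,d)=1$ needed to keep $X$ smooth in the first construction.
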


The idea of this example is taken from \cite{Ka}, Example 3.1 (2). 

These bounds are optimal up to dimension 3 (for the three dimensional case see \cite{CCZ}).  
Note that the bound for $n$ even is the same predicted by Fujita's conjecture for the very ampleness, 
while for $n$ odd is one less. 

\hspace{1em}

Let $r'_n$ be the minimal positive integer such that for every $n$-dimensional smooth variety $X$ of general type there is an integer $r\le r'_n$ such that 
$\vert rK_X\vert$ induces a birational map. Let $x_n$ be the minimal integer such that for every $n$-dimensional smooth variety of general type and 
every integer $t \ge x_n$  the map induced by $| tK_X|$ is generically finite. Of course 
$r_n \ge r'_n \ge x'_n$,  $r_n \ge x_n \ge x'_n$. It is also natural to study the behaviour of these numbers.

Taking $X = Y\times C$ with $C$ a smooth curve of genus 
$2$ and $Y$ a smooth variety of general type we get $r_n\ge r_{n-1}$ and $r'_n \ge r'_{n-1}$ for all $n\ge 2$. 

\begin{question}\label{a2}
Is $r_{n+1} > r_n$ for all $n$? Is $r'_{n+1} > r'_n$ for all $n$?
\end{question}

All our examples satisfy $h^i(X,\mathcal {O}_X)=0$
for all $1 \le i \le n-1$.  Manifolds $X$ with the property $h^i(X,\mathcal {O}_X)=0$
for all $1 \le i \le n-1$ are very special, but it may be worthwhile to fix the integer 
$q:= h^1(X,\mathcal {O}_X)$ and study the integers $r_n(q)$, $r'_n(q)$, $x_n(q)$, $x'_n(q)$ 
 ($r_n(\ge q)$, $r'_n(\ge q)$, $x_n( \ge q)$, $x'_n( \ge q)$  and 
$r_n(\le q)$, $r'_n(\le q)$, $x_n( \le q)$, $x'_n( \le q)$) 
 obtained taking only manifolds with irregularity $q$ (resp. $\ge q$, resp. $\le q$). 
Taking $X = Y\times D$ with $D$ a curve of genus $x\ge 2$ we get
$r_n(q) \ge r_{n-1}(q-x)$ for all integers $n\ge 2$ and $q, x$ such that $2 \le x \le q$ 
(and similarly  for the other integers $r_n'(q)$). 

\hspace{1em}

Our last result is that every positive rational number is a canonical volume:

\begin{thm} \label{thm2}
Let $q=r/s$ be a rational number with $r,s >0$ and $(r,s)=1$. There are infinite positive integers $n$ such that there is a smooth variety of dimension $n$ with
$$
\vol (X)= \frac{r}{s}.
$$
\end{thm}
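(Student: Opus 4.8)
The plan is to realise $r/s$ as the volume of a quasismooth weighted hypersurface carrying only canonical singularities, and then to pass to a resolution. Recall that for a general (hence quasismooth and well-formed) hypersurface $Y=X_d\subset \bb P(a_0,\dots,a_{n+1})$ one has $K_Y=\m O_Y(d-\sum_i a_i)$ and
\[
\vol(Y)=\frac{d\,(d-\sum_i a_i)^n}{a_0\cdots a_{n+1}}.
\]
If moreover $d>\sum_i a_i$ and $Y$ has at worst canonical singularities, then $K_Y$ is big, so $Y$ is of general type, and any resolution $\pi\colon X\to Y$ is a smooth variety of general type with $\vol(X)=\vol(Y)$, because $H^0(X,mK_X)=H^0(Y,mK_Y)$ when the singularities are canonical. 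Thus it suffices to produce, for infinitely many $n$, weights and a degree $d$ making the displayed quantity equal to $r/s$ while keeping the singularities canonical.

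First I would normalise $m:=d-\sum_i a_i=1$, so that $K_Y=\m O_Y(1)$ is ample and $\vol(Y)=d/\prod_i a_i$; the point of this choice is that the \emph{dimension} now tunes the numerator directly. Working in $\bb P(1^{\,n},a,b)$ (that is, $n$ weights equal to $1$) with $\gcd(a,b)=1$ gives $d=n+a+b+1$ and $\vol(Y)=\tfrac{n+a+b+1}{ab}$, so the equation to solve is $s(n+a+b+1)=r\,ab$. Factoring $s=s_1s_2$ with $\gcd(s_1,s_2)=1$ and taking \emph{balanced} coprime weights $a=s_1u$, $b=s_2v$ (with $u,v$ large, comparable, and coprime to the relevant quantities) yields $ab=s\,uv$ and $n=ruv-s_1u-s_2v-1$, which is positive and tends to $+\infty$; letting $(u,v)$ range over an infinite family produces infinitely many admissible dimensions, uniformly in the cases $r>s$, $r<s$ and $r=1$. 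When $r>s$ a single weight is even quicker: in $\bb P(1^{\,n+1},a)$ the choice $a=sk$, $d=rk$, $n=(r-s)k-2$ gives $\vol(Y)=rk/sk=r/s$ for all $k\gg0$.

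The hard part will be the singularities. Since $s>1$ forces $a\nmid d$ and $b\nmid d$, the hypersurface must meet the ambient singular locus, picking up cyclic quotient singularities at the coordinate vertices $P_a,P_b$; this is unavoidable, because a genuinely smooth weighted hypersurface with $K$ ample has nef Cartier $K$ and hence \emph{integral} volume. I must therefore check that these quotient singularities are canonical, for otherwise the resolution would lose volume. Locally at $P_a$ the germ is the $\mu_a$-quotient of a hypersurface singularity in $\bb C^{\,n+1}$ whose tangent cone is a general form of degree $e:=d\bmod a$, so I would combine two standard facts: the affine cone over a smooth degree-$e$ hypersurface in $\bb P^{\,n}$ is canonical once $e\le n$, and the Reid--Tai criterion for the quotient $\tfrac1a(1^{\,n})$ holds once $n\ge a$. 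The genuine tension is that solving $\vol(Y)=r/s$ exactly tends to inflate the weights together with $n$; the balanced family above is designed precisely to neutralise this, since there $a,b$ and $e$ are of size $O(\sqrt n)$ while $n$ itself is large, so both canonicity conditions hold automatically for $u,v$ large. The remaining technical core is the bookkeeping that guarantees a balanced coprime factorisation for infinitely many parameters, exactly of the flavour carried out in the proof of Theorem \ref{thm3}, which I expect to reuse. Once canonicity is secured, resolving $Y$ produces the desired smooth $n$-folds of volume $r/s$ for infinitely many $n$.
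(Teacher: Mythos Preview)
Your overall framework---realise $r/s$ as $d/\prod a_i$ for a weighted hypersurface with $K=\m O(1)$, check canonical singularities, then resolve---is exactly the paper's strategy. The gap is in the singularity analysis of your specific construction. In $\bb P(1^{\,n},a,b)$ with $a=s_1u$, $b=s_2v$ and $d=ruv$, the hypersurface is \emph{not} quasismooth at the coordinate vertices once $u,v>1$: at $P_b$ you would need a monomial $z_b^m z_j$ of degree $d$, but $z_b^m z_i$ forces $sv\mid ruv-1$ (impossible since $v\mid ruv$), and $z_b^m z_a$ forces $s_2v\mid ru-s_1$ (impossible for $v$ large and coprime to everything). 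Your single-weight variant $\bb P(1^{\,n+1},sk)$ with $d=rk$ has the same defect for $k>1$. So you are genuinely dealing with a $\mu_a$-quotient of a \emph{singular} hypersurface germ, and your proposed fix---combine ``cone over a smooth degree-$e$ hypersurface is canonical for $e\le n$'' with Reid--Tai for $\tfrac1a(1^{\,n})$---does not work: Reid--Tai applies to quotients of \emph{smooth} germs, and canonical singularities are not preserved under finite quotients in general (already $\bb C^2/\mu_a$ with weights $(1,1)$ is not canonical for $a\ge 3$). The description $e=d\bmod a$ for the tangent cone is also off, since monomials involving $z_b$ can lower the multiplicity in the chart $z_a=1$.

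The idea you are missing, and which the paper exploits, is to put the \emph{fixed} integer $s$ among the weights. The paper works in $\bb P(1^{\,n-2},a,s,b)$ with $d=rab$, choosing $b$ so that $br\equiv 1\pmod s$ and $a$ coprime to $s$ and $b$; then $a\mid d$ and $b\mid d$, so $P_a,P_b\notin X_d$, and the only singular point is $P_s$. Because $br\equiv 1\pmod s$ there \emph{is} a monomial $z_s^{ta}z_a$ of degree $d$, so $X_d$ is quasismooth and $P_s$ is a cyclic quotient $\tfrac1s(1^{\,n-2},b)$ of \emph{bounded} order $s$, which is terminal as soon as $n-2\ge s$. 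The point is that letting $a,b\to\infty$ pushes $n\to\infty$ while the singularity type stays fixed; your balanced scheme, by contrast, inflates the order of the quotient along with $n$, which is exactly what makes the canonicity check intractable.
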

 
This research started from a question of R. Ghiloni on the asymptotical behaviour of $r_n$: we thank him heartily.

\section{The proofs}

We will need the following lemma.

\begin{lemma}
Let $\bb P = \bb P(a_0, \ldots, a_n)$ be a well-formed weighted projective space.  If the coordinate 
points are canonical singularities then all the singularities of $\bb P$ are canonical.
\end{lemma}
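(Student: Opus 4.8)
The plan is to reduce the canonicity of $\bb P$ at an arbitrary point to the canonicity at the coordinate points, by means of the age (Reid--Tai) criterion for cyclic quotient singularities. First I would pin down the local structure of $\bb P=\bb P(a_0,\dots,a_n)$ at a point $p$. Set $J=\{\,j:x_j(p)\neq 0\,\}$ and $d_J=\gcd(a_j:j\in J)$. Working in a chart $x_i\neq 0$ with $i\in J$, where $\bb P$ is $\bb C^n/\mu_{a_i}$ with $\mu_{a_i}$ acting by the weights $(a_k)_{k\neq i}$, and applying Luna's slice theorem at the image of $p$, one checks that the stabiliser of $p$ is exactly $\mu_{d_J}$ and that the linearised action is trivial on the $|J|-1$ coordinate directions indexed by $J\setminus\{i\}$ (because $d_J\mid a_k$ for $k\in J$) and acts with weights $(a_k\bmod d_J)_{k\notin J}$ on the others. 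Hence, up to a smooth factor, $p$ is the cyclic quotient singularity $\frac{1}{d_J}(a_k:k\notin J)$; the coordinate point $P_i$ is precisely the case $J=\{i\}$, giving $\frac{1}{a_i}(a_k:k\neq i)$.

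Next I would invoke the Reid--Tai criterion: a cyclic quotient $\frac1r(b_1,\dots,b_m)$ without quasi-reflections is canonical if and only if $\mathrm{age}(\zeta^t):=\sum_j\{\,t b_j/r\,\}\ge 1$ for all $t=1,\dots,r-1$, where $\{\,\cdot\,\}$ denotes the fractional part. Well-formedness enters exactly here: it gives $\gcd(a_k:k\neq i)=1$, which forbids quasi-reflections in the presentation of $P_i$, so that the hypothesis ``$P_i$ is canonical'' is equivalent to $\sum_{k\neq i}\{\,m a_k/a_i\,\}\ge 1$ for every $m=1,\dots,a_i-1$.

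The core of the proof is then a short comparison of ages. Fix $p$ with index set $J$, choose any $i\in J$, and write $a_i=e\,d_J$ with $e=a_i/d_J$. Given $\ell\in\{1,\dots,d_J-1\}$, put $m=e\ell\in\{1,\dots,a_i-1\}$. Since $d_J\mid a_k$ for $k\in J$, the terms indexed by $k\in J$ contribute $0$ to the fractional-part sums, and one finds
$$
\mathrm{age}_{P_i}(\zeta_{a_i}^{m})=\sum_{k\neq i}\Big\{\frac{m a_k}{a_i}\Big\}=\sum_{k\notin J}\Big\{\frac{\ell a_k}{d_J}\Big\}=\mathrm{age}_{p}(\zeta_{d_J}^{\ell}).
$$
By hypothesis the left-hand side is $\ge 1$; as $\ell$ ranges over $1,\dots,d_J-1$ this shows that every age at $p$ is $\ge 1$, so $p$ is canonical. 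Since $p$ is arbitrary, the lemma follows.

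I expect the main obstacle to be the first step, the identification of the transverse singularity at $p$: making the slice/chart computation and the well-formedness bookkeeping precise, and in particular verifying that the stabiliser is $\mu_{d_J}$ and that the resulting presentations carry no quasi-reflections, so that Reid--Tai really detects canonicity. Once this local model is established the comparison of ages is purely arithmetic, and the key point---that $\mu_{d_J}\subseteq\mu_{a_i}$ because $d_J\mid a_i$, with the extra $J$-directions contributing nothing to the age---makes the reduction to the coordinate points immediate.
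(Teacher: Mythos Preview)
Your proposal is correct and follows essentially the same route as the paper: identify the local model at $p$ as a cyclic quotient of order $d_J=\gcd(a_j:j\in J)$, apply the Reid--Tai/age criterion, and compare the age at $p$ with the age at a coordinate point $P_i$ via the inclusion $\mu_{d_J}\subset\mu_{a_i}$ using $m=e\ell$ with $e=a_i/d_J$. The paper phrases the same comparison in terms of least non-negative residues and sets it up as a contradiction, but the arithmetic identity $\overline{m j a_i}^{\{k\}}=m\,\overline{j a_i}^S$ used there is exactly your $\{e\ell a_k/a_i\}=\{\ell a_k/d_J\}$; your version is slightly more explicit in isolating where well-formedness (absence of quasi-reflections at $P_i$) is needed.
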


\begin{proof}
Let $P$ be a singular point of $\bb P$. Define $U :=\{0,1, \ldots, n  \}$ and let $S \subset U$ be the 
subset of the variables nonzero at $P$. By hypothesis, if $\#S=1$, then $P$ is a canonical singularity. 
Assume then $\#S >1$.

We denote by $h_S$ the highest common factor of the set $\{a_i | i \in S\}$.
Then $h_S>1$ and, chosen a $k \in S$, 
$P$ is a cyclic quotient singularity of type 
$$
\frac{1}{h_S} (a_0,\ldots,\hat{a}_k,\ldots,a_n).
$$

By the criterium of \cite{Re}, page 376, $P$ is a canonical singularity if and only if
$$
\frac{1}{h_S}\sum_{i=0}^n \overline{ja_i}^S \ge 1 \quad \quad \mbox{for all }  1 \le j \le h_S -1
$$
where $\overline{a}^S$ denotes the smallest (non negative) residue of $a$ $\textrm{mod}\ h_S$.

We argue by contradiction. Suppose 
$$
\frac{1}{h_S}\sum_{i=0}^n \overline{ja_i}^S < 1 \quad \quad \mbox{for some }  1 \le j \le h_S -1.
$$

Take a $k \in S$. Then $a_k=mh_S$ for a positive integer $m$. Note that $mj < a_k$. Then we have

$$
\frac{1}{a_k}\sum_{i=0}^n \overline{mja_i}^{\{k\}}= \frac{1}{m h_S}\sum_{i=0}^n \overline{mja_i}^{\{k\}} =\frac{1}{h_S}\sum_{i=0}^n \overline{ja_i}^S < 1,
$$
which contradicts the hypothesis. 
\end{proof}

\begin{prop}\label{prop}
Let $k \ge 2$ and $l \ge 0$ be  integers. Consider the weighted projective space 
$$
\bb P := \bb P(k^{(k+2)}, (k+1)^{(2k-1)}, (k(k+1))^{(l)}).
$$

Then the general hypersurface $X_d$ in $\bb P$ of degree $d:=(l+3)k(k+1)$ 
has at worst canonical singularities, $K_{X_d} \sim \m O_{X_d}(1)$, $\dim X_d=3k+l-1$ and 
$$
\vol (X_d) = \frac{(l+3)}{k^{k+1+l}(k+1)^{2k-2+l}}.
$$
\end{prop}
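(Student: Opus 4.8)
The plan is to realize $X_d$ as a general quasismooth weighted hypersurface and then read off the four assertions, saving the singularity statement for last since it is the only substantial point. First I would record that $\bb P$ is well-formed: as $\gcd(k,k+1)=1$ and the weights $k$ and $k+1$ occur with multiplicities $k+2\ge 2$ and $2k-1\ge 3$, deleting any single weight still leaves both a $k$ and a $k+1$, so the gcd of the others is $1$. Counting variables gives $(k+2)+(2k-1)+l=3k+l+1$ coordinates, hence $\dim\bb P=3k+l$ and $\dim X_d=3k+l-1$.

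Next come the numerical identities. Using $K_{\bb P}\sim\m O_{\bb P}(-\sum a_i)$ and adjunction, $K_{X_d}\sim\m O_{X_d}(d-\sum a_i)$. A direct computation gives $\sum a_i=(k+2)k+(2k-1)(k+1)+lk(k+1)=3k^2+3k-1+lk(k+1)$, while $d=(l+3)k(k+1)=3k^2+3k+lk(k+1)$, so $d-\sum a_i=1$ and $K_{X_d}\sim\m O_{X_d}(1)$. Since $\m O(1)$ is ample,
$$\vol(X_d)=\bigl(\m O_{X_d}(1)\bigr)^{3k+l-1}=\frac{d}{\prod_i a_i},$$
and substituting $\prod_i a_i=k^{k+2+l}(k+1)^{2k-1+l}$ and $d=(l+3)k(k+1)$ yields the stated value. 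All of this is legitimate once $X_d$ is quasismooth and well-formed, so that adjunction applies and $X_d$ is normal.

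The core is the singularity statement, which I would split in two. First, a general $X_d$ is quasismooth: because $d$ is divisible by every weight, $|\m O_{\bb P}(d)|$ contains $x_i^{d/a_i}$ for each coordinate, hence is base-point-free, and the standard Bertini-type quasismoothness criterion for weighted hypersurfaces applies to the general member. Second — the key point — since $d$ is divisible by every weight it is divisible by $h_S$ for every singular stratum $\Sigma_S\subset\mathrm{Sing}(\bb P)$. Hence along $X_d\cap\Sigma_S$ the defining equation is $\mu_{h_S}$-invariant in the transverse cover, so it consumes a weight-zero direction tangent to the stratum rather than a transverse one; consequently the transverse quotient singularity of $X_d$ is precisely $\tfrac1{h_S}(\overline{a_i}^{\,S}:i\notin S)$, the same type as that of $\bb P$ along $\Sigma_S$. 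Thus $X_d$ is canonical exactly when $\bb P$ is canonical along the strata it meets.

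It then suffices to prove that $\bb P$ has canonical singularities, for which I invoke the Lemma and check Reid's criterion at the three kinds of coordinate points. At a weight-$k$ point the $2k-1$ weight-$(k+1)$ variables satisfy $a_i\equiv1\pmod k$ and the rest $\equiv0$, giving $\tfrac1k\sum\overline{ja_i}=\tfrac{(2k-1)j}{k}\ge\tfrac{2k-1}{k}>1$. At a weight-$(k+1)$ point the $k+2$ weight-$k$ variables contribute $k+1-j$ each, giving $\tfrac{(k+2)(k+1-j)}{k+1}\ge\tfrac{k+2}{k+1}>1$. At a weight-$k(k+1)$ point the Chinese Remainder Theorem splits the sum as $(k+2)k\,\overline{j}^{(k+1)}+(2k-1)(k+1)\,\overline{j}^{(k)}$, and a short case check (either $\overline{j}^{(k+1)}\ge1$, forcing the first term $\ge k^2+2k$, or $\overline{j}^{(k+1)}=0$ and $\overline{j}^{(k)}\ge1$, forcing the second $\ge 2k^2+k-1$) shows it is $\ge k(k+1)$. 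Hence all coordinate points are canonical, the Lemma gives that $\bb P$ is canonical, and by the previous paragraph so is $X_d$. I expect the main obstacle to be the local analysis of the second part — establishing quasismoothness of the general member and pinning down the transverse singularity type along each stratum — whereas the canonicity check itself reduces to the three elementary estimates above.
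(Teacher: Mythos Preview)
Your argument is correct and follows the same overall architecture as the paper: verify that $\bb P$ is well-formed, compute the adjunction and volume, reduce the canonicity of $\bb P$ to the three types of coordinate points via the Lemma, and check Reid's criterion there (your computations, including the CRT rewriting at the weight-$k(k+1)$ point, are fine).

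The one genuine difference is in how you pass canonicity from $\bb P$ to $X_d$. The paper observes that $\m O_{\bb P}(d)$ is locally free and base-point-free (since $d$ is a multiple of every weight) and then simply cites a Koll\'ar--Bertini/Reid theorem: a general member of a free linear system on a variety with canonical singularities again has canonical singularities. You instead do the local analysis by hand: quasismoothness gives a nonvanishing partial $\partial f/\partial z_j$ at each singular point, and because $h_S\mid d$ the equation is $\mu_{h_S}$-invariant, forcing $a_j\equiv 0\pmod{h_S}$; eliminating $z_j$ then drops a weight-zero coordinate and leaves the Reid sums unchanged. This is the Iano-Fletcher style computation and is perfectly valid; the paper's citation is shorter but less transparent about what is actually happening.

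Two small wording issues, neither fatal. First, the eliminated coordinate $z_j$ need not be literally ``tangent to the stratum $\Sigma_S$'' (one can have $j\notin S$ with $h_S\mid a_j$, e.g.\ another weight-$k$ variable when $h_S=k$); what matters, and what your invariance argument actually proves, is only that $a_j\equiv 0\pmod{h_S}$. Second, the resulting quotient type for $X_d$ is not identical to that of $\bb P$ (it has one fewer coordinate), so ``precisely the same type'' is slightly off; the correct statement is that it differs by deletion of a weight-zero entry, hence the Reid inequalities coincide and canonicity transfers.
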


\begin{proof}
The weighted projective space $\bb P$ is well-formed since $k \ge 2$. 
 We use the criterium of \cite{Re}, page 376, to control that the singularities of $\bb P$ are 
canonical. By the previous lemma it is enough to look at the coordinates points. They are of three types.

\begin{enumerate}
	\item  The singularities of type $$\frac{1}{k} (k^{(k+1)}, (k+1)^{(2k-1)}, (k(k+1))^{(l)}).$$ We have to check that
	$$
	\frac{1}{k}(2k-1)\overline{j(k+1)}\ge 1
	$$  
	for $1 \le j \le k-1$, where $\bar{\ }$ denotes the smallest (non negative) residue $\textrm{mod}\ k$. This is trivial since $\overline{j(k+1)}=j \ge 1$ for $1 \le j \le k-1$.
	
	\item   The singularities of type $$\frac{1}{k+1} (k^{(k+2)}, (k+1)^{(2k-2)}, (k(k+1))^{(l)}).$$ We have to check that
	$$
	\frac{1}{k+1}(k+2)\overline{jk}\ge 1
	$$  
	for $1 \le j \le k$, where $\bar{\ }$ denotes the smallest (non negative) residue $\textrm{mod}\ k+1$. This is trivial since $\overline{jk}\ge 1$ for $1 \le j \le k$.

	\item The singularities of type $$\frac{1}{k(k+1)} (k^{(k+2)}, (k+1)^{(2k-1)}, (k(k+1))^{(l-1)}),$$ 
 when $l \ge 1$. We have to check that
	$$
	\frac{1}{k(k+1)}((k+2)\overline{ik} + (2k-1)\overline{i(k+1)})\ge 1 
	$$  
	for $1 \le i \le k(k+1)-1$, where $\bar{\ }$ denotes the smallest (non negative) residue $\textrm{mod}\ k(k+1)$. 
This follows since $k \not | j$ then $\overline{j(k+1)} \ge k+1$ and if $(k+1) \not | j$ then 
$\overline{jk} \ge k$.
\end{enumerate}

Now note that $\m O_{\bb P}(d)$ is base point free 
(since $d$ is a multiple of every weight) and locally free (by Lemma 1.3 of \cite{Mo}).

Then we can apply a Koll\'ar-Bertini theorem (Proposition 7.7 of \cite{Ko}, see also Theorem 1.3 of \cite{ReiCan}) 
to conclude that the general hypersurface $X_d$ of degree $d$ is canonical 
(and obviously well-formed and quasi-smooth, cf. \cite{If}).

Finally, by adjunction (6.14 of \cite{If}), $K_{X_d} \sim \m O_X(1)$ and so

$$
\vol (X_d) =\frac{(l+3)k(k+1)}{k^{k+2}(k+1)^{2k-1}(k(k+1))^l}= \frac{(l+3)}{k^{k+1+l}(k+1)^{2k-2+l}}.
$$
\end{proof}

\begin{proof}[Proof of theorem \ref{thm3}]
Write $n= 3k+l-1$, with integers $k \ge 2$ and $0\le l \le 2$, so $k=\lfloor \frac{n+1}{3} \rfloor$.  We can apply the previous proposition to obtain a canonical variety $X_d$ of dimension $n$ in the projective space
$$
\bb P := \bb P(k^{(k+2)}, (k+1)^{(2k-1)}, (k(k+1))^{(l)}).
$$

By Theorem 3.4.4  (and the proof of the lemma above) in \cite{Do} we deduce that
$$
H^0(X, mK_{X_d})=0
$$
for $ 0<m <k$.

Moreover
$$
\vol(X_d)= \frac{(l+3)}{k^{k+1+l}(k+1)^{2k-2+l}} < 
\frac{(l+3)}{k^{n+l}} =
\frac{l+3}{3k^l} \cdot \frac{3}{k^{n}} \le \frac{3^{n+1}}{(n-1)^{n}}.
$$

Take as $X$ any desingularization of $X_{d}$.
\end{proof}

\begin{proof}[Proof of theorem \ref{thm4}]
Let $n= 3k+l-1$, with integers $k \ge 2$ and $2\le l \le 4$ so $k=\lfloor \frac{n-1}{3} \rfloor$. 
 We can apply the  proposition \ref{prop} to obtain a canonical variety $X_d$ of dimension $n$ in the projective space
$$
\bb P := \bb P(k^{(k+2)}, (k+1)^{(2k-1)}, (k(k+1))^{(l)}).
$$
We denote the coordinates of this space $z_i$ for $0 \le i \le n+1$.
Recall that $K_{X_d} \sim  \m O_{X_d} (1)$.

Fix $t < k(k+1)$. Thanks to Proposition 3.3 of \cite{Mo} we get
$$
|\m O_X(t)|=|\m O_{\bb P} (t)|,
$$
but the last two variable $z_n$ and $z_{n+1}$ can't appear in an element of $|\m O_{\bb P} (t)|$ 
for degree's reasons.   

Take as $X$ any desingularization of $X_{d}$.
Then the map induced by $|t K_X|$ is not generically finite. In particular 
$$x'_n\ge k(k+1) \ge \frac{n(n-3)}9.$$
\end{proof}

\begin{proof}[Proof of theorem \ref{thm}]
We first consider  the case $n$ even.  Let $d=n+3$ and let $\bb P$ the weighted projective space $\bb P(1^{(n)}, 2, d)$. The coherent sheaf $\mathcal {O}_{\mathbb {P}}(2d)$ is a line bundle
(\cite{Mo}, Lemma 1.5). We call the coordinates of this space $z_i$ for $0 \le i \le n+1$.  Since $d$ is odd the general weighted hypersurface $X$ of degree $2d$ do not meet the singularities of $\bb P$ and it is smooth. Note that its equation is of the form 
$$z_{n+1}^2= P(z_0, \dots, z_n)$$
where $P$ is a polynomial of weighted degree $2d$. Moreover we have $K_X \sim  \m O_X (1)$.

If we take a positive integer $t < d$, then the linear system $|t K_X|$ does not induce a birational map. Indeed by Proposition 3.3 of \cite{Mo} we have
$$
|\m O_X(t)|=|\m O_{\bb P} (t)|,
$$
but the variable $z_{n+1}$ can't appear in an element of $|\m O_{\bb P} (t)|$ for degree's reasons and so the induced map has at least degree 2.

If $n=1$ we use a smooth curve of genus $2$. If $n$ is odd and $n\ge 3$ we consider a manifold $Y$ 
of dimension $n-1$ such that $r_n \ge n+2$ and define $X:= Y \times C$ where $C$ is smooth curve of 
genus $2$. Alternatively, for $n$ odd, take $d=n+2$ and a general hypersurface of degree $2d$ in $\bb P(1^{(n+1)},d)$.
\end{proof}

\begin{proof}[Proof of theorem \ref{thm2}]
Let $b$ be a positive integer such that
$$
br \equiv 1 \quad mod\ s,
$$
and write
$$
br-1=ts. 
$$
Let $a$ be a positive integer such that $(a,s)=(a,b)=1$.

We set
$$
n:=rab+1-a-s-b, \quad \quad d:=n-1+a+s+b=rab
$$
$$
\bb P= \bb P(1^{(n-2)},a, s,b).
$$
Observe that choosing $a$ and $b$  we can have $n$ arbitrarily large. Hence we may assume $n\ge 3$.

Note that $(a,s)=(a,b)=(s,b)=1$, hence the only singularities of $\bb P$ are $P=(0^{(n-2)},0,1,0)$, $Q=(0^{(n-2)},1,0,0)$ and $R=(0^{(n-2)},0,0,1)$.

Now consider a general weighted hypersurface $X_d$ of degree $d$ in $\bb P$.  The sheaf $\mathcal {O}_{\bb P}(1)$ is locally free and spanned
outside $P$. By Bertini's theorem applied to $\bb P \setminus \{P\}$ we get that $X_d$ is smooth outside $P$. Since $s \not | d$, $a  | d$ and $b | d$, so $P \in X_d$ while $Q,R \notin X_d$. Since
$n\ge 3$ and $X_d$ has a unique singular point, it is well-formed in the sense of \cite{If}.

We will show that for $a$ and $b$ large enough, $X_d$ is a (well-formed) quasi-smooth variety with at most a terminal singularity in $P$. Then we would have finished. Indeed note that by adjunction (6.14 of \cite{If}) $K_X \sim \mathcal O_X(1)$ is ample and therefore

$$
\vol (X_d)= K_X ^n = \mathcal O_X(1)^n=  \frac{d}{asb}=\frac{r}{s}.
$$

To control the quasi-smoothness we use  criterium 8.1 of \cite{If}.

Let $z_0, \ldots,  z_{n+1}$ be the coordinates of $\bb P$. For every $I \subset \{0,\ldots, n+1\}$ except $I=\{n\}$ the condition 2.a of 8.1 in \cite{If} is satisfied because the only variable whose degree doesn't divide $d$ is $z_n$.  In the case $I=\{n\}$ we can use 2.b since $d=tas + a$ and we can take the monomial 
$$
z_n^{ta} z_{n-1}.
$$
 
Since $X_d$ is quasi-smooth its singularities are induced by those of $\bb P$ and so we have only  to control that $X_d$ is terminal in $P$.

Let $f=0$ be an equation of $X_d$. We can write
$$
f= z_n^{ta} z_{n-1} + \ldots
$$

We consider the affine piece $(z_n=1)$. The point $P \in X_d$ looks like 
$$
(\tilde f =f(z_0, \ldots,z_{n-1},1,z_{n+1})= z_{n-1}+\ldots =0) \subset \bb A^{n+1} / \epsilon
$$
where $\epsilon$ is a primitive $s$-th root of unity and acts via
$$
z_i \mapsto \epsilon z_i \quad  0\le i \le n-2,
$$
$$
z_{n-1} \mapsto \epsilon^a z_{n-1}
$$
and
$$
z_{n+1} \mapsto \epsilon^b z_{n+1}.
$$

Note that  $\partial f / \partial z_{n-1} \neq 0$ in $P$, hence, by the Inverse Function Theorem, $z_i$ are local coordinates for $P$ in $X_d$ for $i\neq n-1,n$. This gives a quotient singularity of type
$$
\frac{1}{s} (1^{(n-2)},b).
$$
By the criterium of \cite{Re}, page 376, if $n-2 \ge s$ then $P$ is terminal.

Now you can simply take a desingularization of $X_d$.
\end{proof}

\providecommand{\bysame}{\leavevmode\hbox to3em{\hrulefill}\thinspace}

\end{document}